\newtheorem{thm}{Theorem}
\newtheorem{lem}{Lemma}
\newtheorem{cor}{Corollary}
\newtheorem{prop}{Proposition}
\newtheorem{rem}{Remark}
\newtheorem{defin}{Definition}
\newtheorem{ex}{Example}
\newcommand{\Z}{\mathbb Z}
\newcommand{\Q}{\mathbb Q}
\newcommand{\R}{\mathbb R}
\newcommand{\C}{\mathbb C}
\DeclareMathOperator{\abs}{abs}
\newcommand{\dynkinradius}{.08cm}
\newcommand{\dynkinstep}{.75cm}
\newcommand{\dynkindot}[2]{\fill (\dynkinstep*#1,\dynkinstep*#2) circle (\dynkinradius);}
\newcommand{\dynkinXsize}{1.5}
\newcommand{\dynkincross}[2]{
\draw[thick] (#1*\dynkinstep-\dynkinXsize,#2*\dynkinstep-\dynkinXsize) -- (#1*\dynkinstep+\dynkinXsize,#2*\dynkinstep+\dynkinXsize);
\draw[thick] (#1*\dynkinstep-\dynkinXsize,#2*\dynkinstep+\dynkinXsize) -- (#1*\dynkinstep+\dynkinXsize,#2*\dynkinstep-\dynkinXsize);
}
\newcommand{\dynkinline}[4]{\draw[thin] (\dynkinstep*#1,\dynkinstep*#2) -- (\dynkinstep*#3,\dynkinstep*#4);}
\newcommand{\dynkindots}[4]{\draw[dotted] (\dynkinstep*#1,\dynkinstep*#2) -- (\dynkinstep*#3,\dynkinstep*#4);}
\newenvironment{dynkin}{\begin{tikzpicture}[decoration={markings,mark=at position 0.7 with {\arrow{>}}}]}
{\end{tikzpicture}}
\title[A classification of small operators using graph theory]{A classification of small operators using graph theory}
\author{Terrence Bisson}
\address{Department of Mathematics and Statistics,
Canisius College, 2001 Main Street, Buffalo, NY 14208}
\email{bisson@canisius.edu}
\author{Jonathan Lopez}
\address{Department of Mathematics and Statistics,
Canisius College, 2001 Main Street, Buffalo, NY 14208}
\email{lopez11@canisius.edu}
\date{August, 21, 2017}
\keywords{Operator, Graph, Spectral Radius, $ADE$ Classification}
\subjclass[2010]{05C50,15A60,15B36,17B20,17B22,20F55}
\begin{document}

\begin{abstract}
Given a real $n \times m$ matrix $B$, its operator norm can be defined as 
$$|B|=\max_{|v|=1}|Bv|.$$
We consider a matrix ``small'' if it has non-negative integer entries 
and its operator norm is less than $2$.
These matrices correspond to bipartite graphs with spectral radius less than $2$, 
which can be classified as disjoint unions of Coxeter graphs.  This gives a direct route to an $ADE$-classification result in terms of very basic mathematical objects.
Our goal here is to see these 
results as part of 
a general program of classification of small objects, 
relating quadratic forms, reflection groups, root systems, and Lie algebras.
\end{abstract}

\maketitle

\section{Classifications}

\medskip
In mathematics, a classification result describes all possible structures of a given type, usually by 
showing that every structure 
is equivalent to one which decomposes 
into ``components'', each equivalent to one from a set of basic types.


Good examples of classification in mathematics are rare and interesting.
The  description of the possible structures of semi-simple Lie algebras over the complex numbers
is one of the most important examples.  
Every such Lie algebra is isomorphic to a direct sum of simple types, known by the alphabet
$A$, $B$, $C$, $D$, $E$, $F$, $G$ (see Fulton and Harris \cite{FH}, for instance).  
The $ADE$ part of this classification has surprising 
similarities with classification results
in many other areas of mathematics (see, for example, Hazewinkel, et al. \cite{HHSV}).
For instance, the $ADE$-series appears in Coxeter's classification of the simply-laced crystallographic finite reflection groups (see \cite{Cox}); and Cameron, et al. \cite{cameron} showed that the classification of certain sets of lines in $\R^n$ at mutual angles of $60^{\circ}$ or $90^{\circ}$ also involves the $ADE$-series (see Theorem 3.5 in \cite{cameron}).
These results have led to the development of large areas of ongoing research.


In \cite{symposium}, Arnol'd asked if the appearance of the $ADE$-series in these classifications was merely coincidence, or if there was some profound underlying cause.  Though we do not claim to provide an answer to Arnol'd's question, we do exposit a direct route to an $ADE$-classification result in terms of very basic mathematical objects.  Namely, the non-negative integer matrices with operator norm less than $2$ can be classified by the $ADE$-series of graphs.  We hope the simplicity of development will appeal to a wide audience, since the usual paths one takes to arrive at an $ADE$-classification can be long and difficult.

More generally, we want to recommend some notions of smallness in various parts of mathematics,
and show how such ``small'' objects can be classified.  In 
our examples, the small objects satisfy a quantitative bound and are
defined over the non-negative integers.

In Section 2, we define operator norm and record some results concerning the operator norm needed later in the paper.  In particular, we explain how the operator norm of a rectangular matrix can be determined from an associated square symmetric matrix and we note that the norm of a square symmetric matrix is equal to its spectral radius.  In Section 3, we prove some results concerning the Perron-Frobenius theory of non-negative square matrices.  Since we are interested only in symmetric matrices, some of the proofs differ from the standard proofs, and are simpler (and we hope more intuitive).  In Section 4, we explain the connection between small operators and small graphs, and show how small operators can be classified by the Coxeter graphs associated to the $ADE$ series.  In particular, any small operator can be represented by a disjoint union of the Coxeter graphs for the $ADE$ series.  In the final section, we sketch related ideas of smallness for
quadratic forms, reflection groups, and root systems, 
using graphs as an organizing principle.

Our ideas here are inspired by a very interesting chapter in the monograph 
by Goodman, de la Harpe, and Jones \cite{GdlaHJ}.

\section{Operators}

By an operator we mean a linear transformation $B:\R^n\to \R^m$.  So an operator $B$ can be represented by a rectangular matrix.

\begin{defin}
Let $B$ be a rectangular real matrix, i.e., an operator $\R^n\to\R^m$.
The operator norm $|B|$ can be defined
as $$|B|={\rm max}_{v\neq 0}{\frac{|Bv|}{|v|}}={\rm max}_{|v|=1}|Bv|.$$
\end{defin}

Let's consider matrices with non-negative integer entries; they are just
the finite sums of the basic operators $E_{i,j}$ 
(the rank 1 operators, the matrices with all entries $0$ except for 
just one $1$ in the $i,j$ position).
 
\begin{defin}
An operator $B$ is considered small if it has non-negative integer entries and $|B|<2$.
\end{defin}

\begin{rem}
In the final section we will 
give some indications of why $2$ is a natural and useful bound here.
\end{rem}

Our goal in this paper is to describe the classification of all small operators.
The appropriate notion of equivalence is given by the action of 
symmetric groups on the entries of vectors in $\R^m$ and $\R^n$.
The appropriate notion of decomposition corresponds to direct sum of matrices 
(juxtaposition of blocks).

\begin{ex}
Each $|E_{i,j}|=1$, so these are small operators.
\end{ex}

Note that a small operator can only have $0, 1$ entries.

\begin{ex}
Which $3\times 2$ matrices are small?    Consider all the $3\times 2$  matrices with $0,1$ entries; there are $64$ of these and $54$ of them are small. Each non-small example is 
equivalent to one of the following: 
$$\begin{pmatrix}1&1\\1& 1\\ 0&0 \end{pmatrix}\quad\begin{pmatrix}1&1\\1& 1\\ 1&0 \end{pmatrix}
\quad \begin{pmatrix}1&1\\1& 1\\ 1&1 \end{pmatrix}\ $$
\end{ex}

The definition of operator norm, 
maximizing a continuous function over a compact set,
seems based on real analysis.
For instance, we could use Lagrange multipliers to calculate the operator norm.
But for our purposes it is convenient to work with an associated symmetric matrix.

\bigskip
For any operator with matrix $B$ we have symmetric matrices $BB^\top$ and $B^\top B$;
they have  real eigenvalues (since they are symmetric)
and their eigenvalues are non-negative, since $(B^\top B)v=av$ with $v\neq0$ implies
$$av^\top v=v^\top(B^\top B)v=(Bv)^\top(Bv),$$
so that $a$ times a positive number is non-negative; and similarly for $BB^\top$.

This is related to the following construction.
Given an operator with matrix $B\in{\rm Mat}_{m,n}(\R)$, we may form a symmetric matrix $A\in{\rm Mat}_{m+n}(\R)$ by
$$A=\begin{pmatrix}0&B\\B^\top& 0\\ \end{pmatrix}.$$
We will refer to $A$ as the symmetric matrix associated to $B$.  Note that
$$A^2=\begin{pmatrix}BB^\top&0\\0&B^\top B\\ \end{pmatrix}$$
has square diagonal blocks.


\begin{lem}\label{norm.is.max.of.block.norms}
If $M\in{\rm Mat}_{m+n}(\R)$ is block diagonal on square matrices $M_1\in{\rm Mat}_m(\R)$ and $M_2\in{\rm Mat}_n(\R)$,
then $|M|={\rm max}(|M_1|,|M_2|)$.
\end{lem}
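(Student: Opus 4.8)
The plan is to work directly from the definition $|M|=\max_{|v|=1}|Mv|$, exploiting the fact that block-diagonality lets us split every vector. Write $v\in\R^{m+n}$ as $v=(v_1,v_2)$ with $v_1\in\R^m$ and $v_2\in\R^n$. Then $Mv=(M_1v_1,M_2v_2)$, so that $|Mv|^2=|M_1v_1|^2+|M_2v_2|^2$ and $|v|^2=|v_1|^2+|v_2|^2$. Thus computing $|M|$ amounts to maximizing the ratio
$$\frac{|M_1v_1|^2+|M_2v_2|^2}{|v_1|^2+|v_2|^2}$$
over all nonzero $(v_1,v_2)$.

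For the inequality $|M|\le\max(|M_1|,|M_2|)$, I would use the defining bounds $|M_1v_1|\le |M_1|\,|v_1|$ and $|M_2v_2|\le |M_2|\,|v_2|$, so the numerator above is at most $|M_1|^2|v_1|^2+|M_2|^2|v_2|^2$. Since this is a weighted average (with weights $|v_1|^2$ and $|v_2|^2$, not both zero) of $|M_1|^2$ and $|M_2|^2$, it is at most $\max(|M_1|^2,|M_2|^2)$ times the denominator; taking square roots gives the claim.

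For the reverse inequality $|M|\ge\max(|M_1|,|M_2|)$, I would choose a unit vector $v_1$ with $|M_1v_1|=|M_1|$ (such a maximizer exists because the unit sphere in $\R^m$ is compact and $v_1\mapsto|M_1v_1|$ is continuous), and take the test vector $v=(v_1,0)$ in $\R^{m+n}$. This is a unit vector with $|Mv|=|M_1|$, so $|M|\ge|M_1|$; symmetrically $|M|\ge|M_2|$. Combining the two inequalities yields $|M|=\max(|M_1|,|M_2|)$.

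I do not expect a genuine obstacle here; the only points needing a word of care are that the maxima defining $|M_1|$ and $|M_2|$ are actually attained (compactness), and that the weighted-average step correctly handles the degenerate cases $v_1=0$ or $v_2=0$ (in which the ratio simply equals $|M_2v_2|^2/|v_2|^2\le|M_2|^2$ or $|M_1v_1|^2/|v_1|^2\le|M_1|^2$, respectively). Everything else is a routine calculation.
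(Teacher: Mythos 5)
Your proof is correct and follows essentially the same route as the paper's: split $v=(v_1,v_2)$, bound $|Mv|^2=|M_1v_1|^2+|M_2v_2|^2$ by $\max(|M_1|,|M_2|)^2|v|^2$ for the upper bound, and test with $(v_1,0)$ for a maximizing unit vector $v_1$ to get the lower bound. The only cosmetic difference is that you argue both lower bounds symmetrically while the paper assumes $|M_1|\geq|M_2|$ without loss of generality.
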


\begin{proof}
Let $m={\rm max}(|M_1|,|M_2|)$.  So $|M_1|\leq m$ and $|M_2|\leq m$.
So $|M_1v_1|\leq m|v_1|$ and $|M_2v_2|\leq m|v_2|$ for any $v_1\in\R^m$ and any $v_2\in\R^n$. Consider 
$$v=\begin{pmatrix}v_1\\v_2\\ \end{pmatrix}\in\R^{m+n}\quad {\rm and}\quad
Mv=\begin{pmatrix}M_1&0\\0&M_2\\ \end{pmatrix}\begin{pmatrix}v_1\\v_2\\ \end{pmatrix}=\begin{pmatrix}M_1v_1\\M_2v_2\\ \end{pmatrix}$$
So $|Mv|^2=|M_1v_1|^2+|M_2v_2|^2\leq m^2(|v_1|^2+|v_2|^2)=m^2|v|^2$.  Thus
$|M|\leq m$.   Suppose $|M_1|\geq |M_2|$ so that $|M_1|=m$.
There is a unit vector $v_1$ such that $|M_1v_1|=|M_1|$.  Taking $v_2=0$
in the above gives unit vector $v$ with $|Mv|=|M_1v_1|=|M_1|=m$.  So $|M|\geq m$, and
thus $|M| = m$.
\end{proof}

For any $B$ consider the symmetric matrix $A$ associated to it.  Lemma \ref{norm.is.max.of.block.norms} can be used to prove the following result.

\begin{thm}
For an operator with matrix $B$, its operator norm is equal to the operator norm of its associated symmetric matrix $A$: $|A|=|B|$.
\end{thm}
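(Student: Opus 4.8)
The plan is to relate the operator norm of $A$ to the operator norms of the diagonal blocks of $A^2$, and then identify those with the relevant quantities attached to $B$. First I would observe that $A$ is symmetric, so its operator norm equals its spectral radius; equivalently $|A|^2$ equals the spectral radius of $A^2$, which is a non-negative symmetric matrix. Since $A^2$ is block diagonal on the square blocks $BB^\top$ and $B^\top B$, Lemma \ref{norm.is.max.of.block.norms} gives $|A^2| = \max(|BB^\top|, |B^\top B|)$. It remains to compute these two block norms and to justify the passage between $|A|$ and $|A^2|$.

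Next I would handle the block norms. For the symmetric non-negative matrix $B^\top B$ we have $|B^\top B|$ equal to its spectral radius, i.e. its largest eigenvalue $a$. By the computation already recorded in the excerpt, if $(B^\top B)v = av$ with $v \neq 0$ then $a|v|^2 = |Bv|^2$, so $a = |Bv|^2/|v|^2 \le |B|^2$; conversely, choosing $v$ to be a unit vector achieving $|Bv| = |B|$ and decomposing it along an orthonormal eigenbasis of $B^\top B$ shows $|B|^2 \le a$. Hence $|B^\top B| = |B|^2$, and the symmetric argument gives $|BB^\top| = |B|^2$ as well (alternatively one can cite the standard fact that $BB^\top$ and $B^\top B$ have the same nonzero eigenvalues). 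Therefore $|A^2| = \max(|B|^2, |B|^2) = |B|^2$.

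Finally I would connect $|A^2|$ with $|A|$. Because $A$ is symmetric it is orthogonally diagonalizable with real eigenvalues $\lambda_i$, and $|A| = \max_i |\lambda_i|$; the eigenvalues of $A^2$ are exactly the $\lambda_i^2 \ge 0$, so $|A^2| = \max_i \lambda_i^2 = (\max_i|\lambda_i|)^2 = |A|^2$. Combining, $|A|^2 = |A^2| = |B|^2$, and since both norms are non-negative we conclude $|A| = |B|$.

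The routine parts are the two-sided inequality $|B^\top B| = |B|^2$ and the eigenvalue bookkeeping; neither is a real obstacle. The one point that deserves care — and which I expect to be the crux of a clean write-up — is being explicit about \emph{why} the operator norm of a symmetric matrix equals its spectral radius (and hence that squaring the matrix squares the norm), since that is the single fact doing the heavy lifting. If the excerpt's promise that ``the norm of a square symmetric matrix is equal to its spectral radius'' is taken as already established, the proof is then just the chain of equalities $|A|^2 = |A^2| = \max(|BB^\top|,|B^\top B|) = |B|^2$ via Lemma \ref{norm.is.max.of.block.norms}.
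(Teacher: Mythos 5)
Your proof is correct, but it runs on a different engine than the paper's. Both arguments share the skeleton — apply Lemma \ref{norm.is.max.of.block.norms} to the block-diagonal matrix $A^2$, show the two block norms both equal $|B|^2$, and pass from $|A^2|$ to $|A|^2$ — but you do the last two steps spectrally: you invoke the diagonalization of symmetric matrices (i.e.\ Theorem \ref{norm.equals.spectral.radius}, which in the paper appears only \emph{after} this theorem) to get $|A^2|=|A|^2$ from the eigenvalues $\lambda_i^2$, and you compute $|B^\top B|$ as its top eigenvalue via a Rayleigh-quotient argument. The paper instead stays entirely at the level of norm inequalities: from $|Bv|^2=\langle B^\top Bv,v\rangle\le|B^\top B|$ it gets $|B|^2\le|B^\top B|\le|B^\top|\,|B|$, deduces $|B|=|B^\top|$ by symmetry of the argument, hence the identity $|C^\top C|=|C|^2$ for any $C$, and then applies that identity twice — to $B$ for the blocks and to $A$ itself (using $A^\top=A$, so $A^2=A^\top A$) to get $|A^2|=|A|^2$ — with no eigenvalues or diagonalization anywhere. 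What your route buys is brevity and transparency once the norm-equals-spectral-radius fact is granted; what the paper's route buys is independence from that fact (keeping the logical order of the paper clean, though there is no actual circularity since Theorem \ref{norm.equals.spectral.radius} is proved independently) and, as a by-product, the useful identities $|B|=|B^\top|$ and $|B^\top B|=|B|^2$. One small slip to fix in your write-up: the ``symmetric argument'' applied to $B^\top$ gives $|BB^\top|=|B^\top|^2$, not $|B|^2$ directly; your parenthetical alternative (that $BB^\top$ and $B^\top B$ have the same nonzero eigenvalues) does close this gap, so lean on that, or first establish $|B|=|B^\top|$ as the paper does.
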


\begin{proof}
For the square matrix $A$ associated to $B$ shown above, $A^2$ has square diagonal blocks $M_1=BB^\top$ and $M_2=B^\top B$.  For any vector $v$ with $|v|=1$,
$$|Bv|^2=\langle Bv,Bv\rangle=\langle B^\top Bv,v\rangle\leq|B^\top Bv|\leq|B^\top B|.$$
This gives $|Bv|\leq\sqrt{|B^\top B|}$, so that $|B|^2\leq|B^\top B|\leq|B^\top|\cdot|B|$.  Thus, for $B\neq 0$, $|B|\leq|B^\top|$.  Replacing $B$ with $B^\top$ in the above argument gives $|B^\top|\leq|B|$, so that $|B|=|B^\top|$.  So
$$|B^\top|\cdot|B|=|B|^2\leq|B^\top B|\leq|B^\top|\cdot|B|,$$
which gives $|B|^2=|B^\top B|$.  Replacing $B$ with $B^\top$ gives $|B|^2=|BB^\top|$.
Now $|M_1|=|BB^\top|=|B|^2$ and $|M_2|=|B^\top B|=|B|^2$, so $|A^2|=|B|^2$ by Lemma \ref{norm.is.max.of.block.norms}.
But $A^2=A^\top A$ and $|A^\top A|=|A|^2$. So $|A|^2=|B|^2$ and thus $|A|=|B|$.
\end{proof}



So the study of small operators can be carried out in the setting of small symmetric matrices.
The set of eigenvalues of a square matrix  $A$ is 
called its spectrum; and the spectral radius $\rho(A)$ is the radius of 
the smallest disk centered at 0 in the complex plane and containing the spectrum of $A$.
When $A$ is a symmetric matrix, all its eigenvalues are real numbers,
and $\rho(A)$ is the largest of these in absolute value, leading to the following well-known
result.

\begin{thm}\label{norm.equals.spectral.radius}
For a symmetric matrix $A$, the operator norm $|A|$ is equal to the spectral radius $\rho(A)$.
\end{thm}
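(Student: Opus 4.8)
The plan is to reduce everything to the case of a diagonal matrix by means of the spectral theorem, which is really the only nontrivial ingredient. First I would invoke the spectral theorem for real symmetric matrices: there exist an orthogonal matrix $Q$ and a diagonal matrix $D=\mathrm{diag}(\lambda_1,\dots,\lambda_{m+n})$ whose diagonal entries are exactly the (real) eigenvalues of $A$, with $A=QDQ^\top$. Everything after this step is routine bookkeeping.

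Next I would use the fact that pre- or post-composition with an orthogonal map leaves the operator norm unchanged, since an orthogonal matrix $Q$ satisfies $|Qw|=|w|$ for every vector $w$. Concretely,
$$|A|=\max_{|v|=1}|QDQ^\top v|=\max_{|v|=1}|DQ^\top v|,$$
and as $v$ ranges over all unit vectors so does $w=Q^\top v$ (again because $Q^\top$ is a bijective isometry), so $|A|=\max_{|w|=1}|Dw|=|D|$.

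Then I would compute $|D|$ directly. For a unit vector $w=(w_1,\dots,w_{m+n})$ we have $Dw=(\lambda_1 w_1,\dots,\lambda_{m+n}w_{m+n})$, hence
$$|Dw|^2=\sum_i \lambda_i^2\, w_i^2\le\Big(\max_i\lambda_i^2\Big)\sum_i w_i^2=\max_i\lambda_i^2,$$
with equality when $w=e_k$ for an index $k$ attaining $\max_i|\lambda_i|$ (such an index exists because the list of eigenvalues is finite). Thus $|D|=\max_i|\lambda_i|$, which by the remark preceding the theorem is precisely $\rho(A)$. Combining with the previous paragraph gives $|A|=|D|=\rho(A)$, as claimed.

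I expect the only real obstacle to be the appeal to the spectral theorem (orthogonal diagonalizability of real symmetric matrices); in a fully self-contained treatment one could instead obtain the eigenvalues one at a time by maximizing the Rayleigh quotient $v^\top A v$ over the unit sphere and successively restricting to orthogonal complements, but invoking diagonalizability keeps the proof short. The other point requiring a little care is the equality case, namely checking that the bound $|A|\le\rho(A)$ is actually attained, which is why one notes the existence of an eigenvector of maximal eigenvalue-magnitude.
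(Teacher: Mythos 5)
Your proposal is correct and takes essentially the same route as the paper: both rest on the spectral theorem and the computation $|Av|^2=\sum_i \lambda_i^2 a_i^2\le\bigl(\max_i\lambda_i^2\bigr)\sum_i a_i^2$ for a unit vector expanded in an orthonormal eigenbasis, with equality attained at an eigenvector of largest $|\lambda_i|$; your conjugation $A=QDQ^\top$ and substitution $w=Q^\top v$ is just that eigenbasis expansion written in matrix form. (The only quibble is the stray dimension $m+n$, carried over from the bipartite construction --- the theorem concerns an arbitrary symmetric matrix.)
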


\begin{proof}
A symmetric $n\times n$ matrix $A$ determines an orthonormal basis of eigenvectors $v_1,\ldots, v_n$, 
with real eigenvalues $\lambda_1,\ldots,\lambda_n$, such that $|\lambda_i|\leq |\lambda_n|$ for all $i$.  
So $\rho(A)=|\lambda_n|$, and $|A|\geq |\lambda_n|$ since $|Av_n|=|\lambda_n|$.
Let $v=\sum a_i v_i$ with $|v|^2=\sum a_i^2=1$; then $|A|\leq |\lambda_n|$, since
$$|Av|^2=\left|\sum a_i \lambda_iv_i\right|^2=\sum |a_i |^2|\lambda_i|^2 \leq \left(\sum |a_i |^2\right)|\lambda_n|^2=|\lambda_n|^2.$$
\end{proof}

\section{Non-negative square matrices}

For a matrix  $B$ with real entries, we write $B\geq 0$ when all the entries of $B$ are non-negative, and say that $B$ is non-negative. 
When $B\geq 0$ and some entry is non-zero, we write $B>0$; 
when $B\geq 0$ and all entries are non-zero (positive), we write $B\gg 0$.
Let $A\geq B$ mean $A-B\geq 0$, and $A>B$ mean $A-B>0$, and $A\gg B$ mean $A-B\gg 0$. 
In particular, the above notations apply for vectors with real entries.

We want to use some part of the Perron-Frobenius theory of non-negative square matrices.  
The proofs in this theory tend to be rather intricate; see Gantmacher \cite{Gant} or Sternberg \cite{Stern}, for instance.
But we only need to consider symmetric matrices in this paper.  So we present proofs for the symmetric case;
they seem simpler than the usual proofs, making efficient use of the 
Rayleigh quotient function for a symmetric matrix.

Consider $\R^n$ with its real inner product (dot product) $\langle x,y \rangle=x^\top y$.
For a square symmetric matrix $A$, 
define the real-valued function $R_A$  by 
$$R_A(x)={\frac{\langle Ax,x\rangle}{\langle x,x\rangle}}$$ 
for $x\neq 0$.  Note that $R_A(ax)=R_A(x)$ for any non-zero number $a$; so we can consider $R_A$ to be defined on the set of rays,
or on the set of unit vectors.  Let $\lambda$ denote the maximal value achieved by $R_A$ on the unit sphere.  
The fact that $\lambda$ is the largest
of all the eigenvalues of $A$ is
part of the ``minmax principle'' for the Rayleigh quotient of $A$ (see \cite{lax}).
Recall that a symmetric matrix has all its eigenvalues real.

\begin{thm}\label{Rayleigh.max}
The maximum value of the Rayleigh quotient  $R_A$ is the largest eigenvalue $\lambda$ of $A$,
and the maximum is achieved only at eigenvectors for $A$ and $\lambda$.
\end{thm}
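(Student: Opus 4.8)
The plan is to establish Theorem \ref{Rayleigh.max} directly from the spectral theorem for symmetric matrices, which provides an orthonormal eigenbasis $v_1,\ldots,v_n$ with real eigenvalues $\lambda_1 \leq \cdots \leq \lambda_n$, so that $\lambda = \lambda_n$ is the candidate for the maximum. First I would write an arbitrary nonzero vector as $x = \sum a_i v_i$, compute $\langle Ax, x\rangle = \sum \lambda_i a_i^2$ and $\langle x,x\rangle = \sum a_i^2$, and conclude that $R_A(x) = \sum \lambda_i a_i^2 / \sum a_i^2 \leq \lambda_n$ because replacing each $\lambda_i$ by $\lambda_n$ only increases the numerator. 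This shows $\lambda_n$ is an upper bound, and evaluating $R_A(v_n) = \lambda_n$ shows the bound is attained, so the maximum value is exactly $\lambda_n$.

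For the ``only at eigenvectors'' clause, suppose $R_A(x) = \lambda_n$ for some $x = \sum a_i v_i \neq 0$. Then $\sum \lambda_i a_i^2 = \lambda_n \sum a_i^2$, i.e. $\sum (\lambda_n - \lambda_i) a_i^2 = 0$. Every term is non-negative, so each term vanishes; hence $a_i = 0$ whenever $\lambda_i < \lambda_n$. Therefore $x$ is a linear combination of eigenvectors all belonging to the eigenvalue $\lambda_n$, so $x$ is itself an eigenvector of $A$ with eigenvalue $\lambda_n = \lambda$. That completes the characterization.

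The only genuine subtlety — and the step I expect to be the main obstacle at the level of a self-contained exposition — is the appeal to the spectral theorem: one must know a real symmetric matrix is orthogonally diagonalizable. If the paper prefers not to quote this, the alternative is to argue via compactness: $R_A$ attains its maximum $\lambda$ on the unit sphere at some point $u$ (continuity on a compact set), and a Lagrange-multiplier / first-order stationarity computation at $u$ forces $Au = \lambda u$; then one passes to the orthogonal complement of $u$ (which $A$ preserves, by symmetry) and induces on dimension. Given the analytic flavor already present in the excerpt, I would mention both routes but carry out the clean eigenbasis argument above, since Theorem \ref{norm.equals.spectral.radius} has already invoked the orthonormal eigenbasis of a symmetric matrix.
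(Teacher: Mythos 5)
Your proposal is correct and follows essentially the same route as the paper: both expand $x$ in the orthonormal eigenbasis from the spectral theorem, bound $R_A(x)=\sum \lambda_i a_i^2/\sum a_i^2$ by $\lambda_n$, note the bound is attained at $v_n$, and handle the equality case by observing that $\sum(\lambda_n-\lambda_i)a_i^2=0$ with non-negative terms forces $a_i=0$ whenever $\lambda_i<\lambda_n$. The alternative compactness/Lagrange-multiplier route you mention is not needed, since the paper freely invokes the orthonormal eigenbasis here, just as it did in Theorem \ref{norm.equals.spectral.radius}.
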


\begin{proof}
A symmetric $n\times n$ matrix $A$ determines an orthonormal basis of eigenvectors $v_1,\ldots, v_n$, with real eigenvalues
$\lambda_1,\ldots,\lambda_n$, such that $\lambda_i\leq\lambda_n$ for all $i$.  
Let $v=\sum a_i v_i$ with $|v|^2=\sum a_i^2=1$; then
$$R_A(v)=\left\langle \sum a_i\lambda_i v_i,\sum a_i v_i\right\rangle=  \sum a_i^2 \lambda_i \leq \left(\sum a_i^2\right) \lambda_n=\lambda_n.$$
Also, $R_A(v_n)=\lambda_n$, so  the maximum value of $R_A(v)$ is $\lambda_n$.
If $R_A(v)=\lambda_n$ then $\sum (\lambda_n-\lambda_i)a_i^2=0$, all non-negative, so that we must have
$(\lambda_n-\lambda_i)a_i=0$ for all $i$; then  $\sum a_i\lambda_i v_i=\sum a_i\lambda v_i$, and $Av=\lambda_nv$.
\end{proof}

\medskip

For any vector $x$, let $\abs(x)$ denote the vector whose entries are the absolute values of the entries of $x$.
Note that $ \langle\abs(x), \abs(x)\rangle= \langle x, x\rangle$.  Also, if $A\geq 0$ then
$\abs(A\ x)\leq A\ \abs(x)$, by the triangle inequality.

\begin{thm}[{\bf Non-Negative Eigenvector, Symmetric Case}]\label{eigenvector.thm}
If $A>0$ and $A$ is symmetric, then the maximum value $\lambda$ of $R_A$ is achieved at some $z>0$ with $Az=\lambda z$.  Also, $|\lambda'|\leq\lambda$ for every eigenvalue $\lambda'$ of $A$.
\end{thm}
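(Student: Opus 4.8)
The plan is to combine Theorem \ref{Rayleigh.max} with the elementary fact, recorded just above, that $\abs(Ax)\leq A\,\abs(x)$ whenever $A\geq 0$. First I would note that since the unit sphere of $\R^n$ is compact and $R_A$ is continuous on it, the maximum $\lambda$ is attained; by Theorem \ref{Rayleigh.max} it equals the largest eigenvalue of $A$ and is attained precisely on unit eigenvectors for $\lambda$. So choose a unit vector $x$ with $Ax=\lambda x$. Observe also that $\lambda\geq 0$: for any $y\geq 0$ one has $\langle Ay,y\rangle=\sum_{i,j}A_{ij}y_iy_j\geq 0$ since $A\geq 0$, so $R_A$ takes a non-negative value and hence its maximum $\lambda$ is non-negative.

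Next I would replace $x$ by $z:=\abs(x)$. Since $\langle\abs(x),\abs(x)\rangle=\langle x,x\rangle=1$, the vector $z$ is again a unit vector, in particular nonzero, and $z\geq 0$ entrywise; thus $z>0$ in the notation of this section. The heart of the argument is the chain
$$R_A(z)=\langle A\,\abs(x),\abs(x)\rangle\ \geq\ \langle\abs(Ax),\abs(x)\rangle\ \geq\ |\langle Ax,x\rangle|\ =\ |\lambda|\ =\ \lambda .$$
Here the first inequality holds because $A\,\abs(x)-\abs(Ax)\geq 0$ entrywise (the cited triangle inequality, using $A\geq 0$) and $\abs(x)\geq 0$, so the inner product of these two non-negative vectors is non-negative; the second inequality is the scalar triangle inequality applied termwise, $\sum_i|(Ax)_i|\,|x_i|\geq|\sum_i(Ax)_i x_i|$; and the final equalities use $\langle Ax,x\rangle=\lambda\langle x,x\rangle=\lambda$ together with $\lambda\geq 0$. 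Since $\lambda$ is the maximal value of $R_A$, every inequality in the chain is an equality, so $R_A(z)=\lambda$, and Theorem \ref{Rayleigh.max} then forces $Az=\lambda z$. This produces the required non-negative eigenvector.

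For the last assertion, let $\lambda'$ be any eigenvalue of $A$ with unit eigenvector $w$. Then $\abs(Aw)=\abs(\lambda'w)=|\lambda'|\,\abs(w)$, and the same manipulation gives
$$|\lambda'|=|\lambda'|\,\langle\abs(w),\abs(w)\rangle=\langle\abs(Aw),\abs(w)\rangle\ \leq\ \langle A\,\abs(w),\abs(w)\rangle=R_A(\abs(w))\ \leq\ \lambda ,$$
using $\langle\abs(w),\abs(w)\rangle=\langle w,w\rangle=1$ and that $\lambda$ is the maximum of $R_A$. Hence $|\lambda'|\leq\lambda$ for every eigenvalue, as claimed.

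The only step needing real care is the displayed chain in the second paragraph: one must keep straight that the entrywise inequality $\abs(Ax)\leq A\,\abs(x)$ combined with $\abs(x)\geq 0$ yields an inequality of inner products in the right direction, and that $|\langle Ax,x\rangle|=\lambda$ relies on $\lambda\geq 0$. Everything else is a direct appeal to compactness and to Theorem \ref{Rayleigh.max}; notably, no part of the full Perron--Frobenius machinery is invoked, and irreducibility/connectedness of $A$ (which would be needed to upgrade the conclusion from $z>0$ to $z\gg 0$) plays no role here.
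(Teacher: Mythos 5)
Your proof is correct and follows essentially the same route as the paper: replace the maximizing eigenvector $x$ by $z=\abs(x)$, use the entrywise triangle inequality $\abs(Av)\leq A\,\abs(v)$ for $A\geq 0$ to force $R_A(z)=\lambda$, and invoke the equality case of Theorem \ref{Rayleigh.max}, with the same argument giving $|\lambda'|\leq\lambda$ for all eigenvalues. The only cosmetic differences are that you treat the maximizer directly via $|\langle Ax,x\rangle|$ instead of specializing the general eigenvalue bound, and you settle for $\lambda\geq 0$ (the paper notes $\lambda>0$ via $R_A(e)>0$), which suffices for your use of $|\lambda|=\lambda$.
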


\begin{proof}
 Assume $A>0$ and $A$ symmetric.  Let $\lambda$ be the maximum value of $R_A$ on the unit sphere, achieved at $x$.
We have $\lambda>0$, since $R_A(e)>0$ where $e$ is the vector of $1$'s.  Also, $A x=\lambda x$ by Theorem \ref{Rayleigh.max}.

Now let $\lambda'$ be any eigenvalue of $A$, with $Ay=\lambda'y$ and $|y|=1$.
Apply the $\abs$ operator to $Ay=\lambda' y$, and use $\langle\abs(y), \abs(y)\rangle= \langle y, y\rangle=1$
to get:
$$|\lambda'|\ \abs(y)=\abs(\lambda' y)=\abs(A y)\leq A\ \abs(y)\quad {\rm so}\quad$$
$$|\lambda'|=\langle|\lambda'| \ \abs(y), \abs(y)\rangle \leq  \langle A \ \abs(y),  \abs(y)\rangle=R_A(\abs(y))\leq \lambda,$$
since $\lambda$ is the maximum of $R_A$. Thus  $|\lambda'|\leq \lambda$, and
for the eigenvalue $\lambda>0$ with $Ax=\lambda x$, we have $\lambda\leq R_A(\abs(x))\leq\lambda$.  By Theorem \ref{Rayleigh.max}, $z=\abs(x)$ is an eigenvector for $A$ with eigenvalue $\lambda$, and $z> 0$ since $x\neq 0$.

\end{proof}

\begin{rem}
Note that for a symmetric matrix $A$ with $A>0$, we have
$$\lambda=\max_{|x|=1}{R_A(x)}=\rho(A)=|A|$$
according to Theorems \ref{norm.equals.spectral.radius}, \ref{Rayleigh.max}, and \ref{eigenvector.thm}.
\end{rem}

Any square matrix has an associated matrix of 1's and 0's, where 1 means non-zero;
and we may interpret this matrix of 1's and 0's as the adjacency matrix
of a {\it directed} graph.  A directed graph is strongly connected if it contains a directed path
from each vertex to every other vertex.
Following Frobenius, 
let us say that a non-negative square matrix is irreducible
when its underlying directed graph is strongly connected.  
 
When $A$ is irreducible, there exists a square matrix $P$ with
all its entries non-zero and with $AP=PA$.
In fact, since the directed graph
 is strongly connected,  we can choose an integer $N$ so large that
there exists a path of length at most $N$ from each vertex to every other vertex. 
Then in the directed graph for matrix $I+A$,  each vertex has a path of length $N$ to each other vertex.
But the entries of $(I+A)^N$ count the paths of length $N$ in this  directed graph-with-loops;
thus $P=(I+A)^N$ for large $N$ has the desired properties.

Since we consider symmetric matrices, we don't need to consider the
underlying graph as a directed graph; and it is strongly connected if and only if it is connected.   

\begin{thm}[{\bf Spectral Radius, Symmetric Case}]\label{pos.eigenvector} If $A$ is irreducible, $A>0$, and $A$ is symmetric, 
then there exists $y\gg 0$ with $Ay=\rho(A) y$. 
\end{thm}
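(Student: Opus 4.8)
The plan is to start from the non-negative eigenvector already produced in Theorem~\ref{eigenvector.thm} and then use irreducibility to ``spread'' its support to all coordinates. By Theorem~\ref{eigenvector.thm}, since $A>0$ is symmetric, the maximal value $\lambda$ of the Rayleigh quotient $R_A$ is attained at some vector $z>0$ with $Az=\lambda z$, and the Remark following that theorem identifies $\lambda$ with $\rho(A)$. So we already have a genuinely non-negative eigenvector $z$ for the eigenvalue $\rho(A)$; the point is that a priori some entries of $z$ could vanish, and the task is to rule this out.

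To do this I would invoke the positive matrix $P=(I+A)^N$ discussed just before the statement: because $A$ is irreducible (equivalently, since $A$ is symmetric, its underlying graph is connected), for $N$ large enough every entry of $P$ is non-zero, i.e. $P\gg 0$, and moreover $P$ commutes with $A$. Now set $y=Pz$. Since $P\gg 0$ and $z>0$ (non-negative but not the zero vector), each coordinate of $y$ is a sum $\sum_j P_{ij}z_j$ of non-negative terms containing at least one strictly positive term, so $y\gg 0$. And since $AP=PA$,
$$Ay=APz=PAz=P(\rho(A)z)=\rho(A)Pz=\rho(A)y,$$
so $y$ is the required strictly positive eigenvector for $\rho(A)$.

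Essentially all the analytic content has been front-loaded into Theorem~\ref{eigenvector.thm} (existence of a non-negative eigenvector for the top eigenvalue, and the identity $\lambda=\rho(A)$) and into the combinatorial observation that irreducibility yields a strictly positive matrix commuting with $A$; the present argument merely glues these together. The only step requiring genuine care is the verification that $Pz$ has no zero entry, and this is exactly where irreducibility (connectedness of the graph) enters — without it $P$ need only be block-positive, and $z$ could remain supported on a proper invariant subset of the coordinates, leaving the conclusion false.
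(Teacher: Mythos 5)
Your proposal is correct and is essentially identical to the paper's own proof: both take the non-negative eigenvector $z>0$ from Theorem~\ref{eigenvector.thm}, form $P=(I+A)^N\gg 0$ commuting with $A$, and set $y=Pz$, observing that $y\gg 0$ and $Ay=\rho(A)y$. Your explicit coordinate-wise check that $Pz$ has no zero entry is the only added detail, and it matches the intent of the paper's argument.
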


\begin{proof}
Since $A$ is irreducible and $A>0$, $P=(I+A)^N$ for large $N$ gives 
a (symmetric square) matrix $P\gg 0$ with $AP=PA$.
Moreover, there exists $z>0$ with $Az=\lambda z=\rho(A)z$, as in Theorem \ref{eigenvector.thm}.
Then $APz=PAz=\lambda Pz=\rho(A)Pz$, and $Pz\gg 0$.  So let $y=Pz$.  
\end{proof}


\begin{thm}[{\bf Comparison Theorem, Symmetric Case}]\label{comparison.theorem}
If $A$ is irreducible and symmetric and $A>B>0$, then $\rho(A)>\rho(B)$.
\end{thm}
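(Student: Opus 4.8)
\emph{Proof proposal.} The plan is to combine the Rayleigh‑quotient description of the spectral radius (Theorems \ref{Rayleigh.max} and \ref{eigenvector.thm}) with the irreducibility of $A$: first obtain the non‑strict inequality $\rho(A)\ge\rho(B)$ by testing $R_A$ on a suitable eigenvector of $B$, and then upgrade it to a strict inequality by analyzing when equality could occur. Throughout I take $B$ symmetric as well (this is automatic in the applications, where $B$ is the adjacency matrix of a subgraph); note also that $A\ge B>0$ forces $A>0$.

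For the non‑strict bound: since $B$ is symmetric with $B>0$, Theorem \ref{eigenvector.thm} applied to $B$ produces a vector $z>0$, which we normalize to $|z|=1$, with $Bz=\rho(B)z$ (and $\rho(B)=\max R_B$). Evaluating $R_A$ at this same $z$ and writing $A=B+(A-B)$, the hypothesis $A-B\ge 0$ together with $z\ge 0$ gives
$$\langle Az,z\rangle=\langle Bz,z\rangle+\langle (A-B)z,z\rangle=\rho(B)+\sum_{i,j}(A-B)_{ij}z_iz_j\ge\rho(B),$$
so $R_A(z)\ge\rho(B)$. Since $A$ is symmetric with $A>0$, the Remark following Theorem \ref{eigenvector.thm} identifies $\max R_A$ with $\rho(A)$, hence $\rho(A)\ge R_A(z)\ge\rho(B)$.

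For strictness I argue by contradiction: suppose $\rho(A)=\rho(B)$. Then $R_A(z)=\rho(A)=\max R_A$, so Theorem \ref{Rayleigh.max} forces $Az=\rho(A)z$. Subtracting $Bz=\rho(B)z$ gives $(A-B)z=(\rho(A)-\rho(B))z=0$. Now irreducibility enters: for $N$ large one has $(I+A)^N\gg 0$ (the computation recorded just before Theorem \ref{pos.eigenvector}), and since $(I+A)^Nz=(1+\rho(A))^Nz$ with $(I+A)^Nz\gg 0$ and $1+\rho(A)>0$, we conclude $z\gg 0$. But $A-B>0$ means $(A-B)_{kl}>0$ for some $k,l$, so the $k$‑th coordinate of $(A-B)z$ is at least $(A-B)_{kl}z_l>0$, contradicting $(A-B)z=0$. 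Hence $\rho(A)>\rho(B)$.

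The step I expect to be the crux — and the only place irreducibility is genuinely needed — is the implication ``$z$ a non‑negative $\rho(A)$‑eigenvector $\Rightarrow z\gg 0$''; everything else is bookkeeping with the Rayleigh quotient. (If one wanted to drop the symmetry hypothesis on $B$, one would additionally need a Perron–Frobenius eigenvector statement for general non‑negative $B$ to supply the non‑negative $\rho(B)$‑eigenvector $z$; in the present setting this is unnecessary.)
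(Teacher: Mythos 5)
Your proof is correct, and the strictness step takes a genuinely different (and somewhat leaner) route than the paper's. Both arguments begin the same way: test the Rayleigh quotient of $A$ at the non-negative Perron eigenvector of $B$ supplied by Theorem \ref{eigenvector.thm} to get $\rho(A)\ge\rho(B)$ (and both, like you note, implicitly need $B$ symmetric, which the statement's title and the applications supply). Where you diverge is in ruling out equality. The paper invokes Theorem \ref{pos.eigenvector} to produce a strictly positive maximizer $x\gg 0$ of $R_A$, observes that under equality the $B$-eigenvector $y$ is a second maximizer of $R_A$, and then must prove $x=y$ via the subtraction trick $z=cx-y$ with $c=\max\{y_i/x_i\}$ and a path-counting argument with $A^m$, before reaching the contradiction $(A-B)x=0$ with $x\gg 0$. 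You never introduce $x$ at all: under equality you use Theorem \ref{Rayleigh.max} to conclude that the $B$-eigenvector $z$ is itself a $\rho(A)$-eigenvector of $A$, so $(A-B)z=0$, and then you upgrade $z>0$ to $z\gg 0$ directly by the $(I+A)^N\gg 0$ computation (in effect inlining the idea behind Theorem \ref{pos.eigenvector} for this particular vector), arriving at the same final contradiction. Your version avoids the two-maximizer comparison and the case analysis $x\neq y$, so it is shorter and isolates the single place where irreducibility is used; the paper's version instead reuses its already-stated Theorem \ref{pos.eigenvector} as a black box and keeps the connectivity argument in the form of an explicit path of length $m$. Both uses of irreducibility are at bottom the same connectivity-via-powers-of-$A$ idea, so nothing is lost either way.
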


\begin{proof}
We start by applying Theorem \ref{eigenvector.thm} to $B$. Let $R_B$ achieve its maximum value $\mu>0$ at unit vector $y>0$.
Then by Theorems \ref{Rayleigh.max} and \ref{eigenvector.thm}, $\mu=\rho(B)$.  Since $A-B>0$ and $y>0$, $(A-B)y\geq0$.  This gives $\langle(A-B)y,y\rangle\geq0$, so that $R_A(y)-R_B(y)\geq 0$.

Now apply Theorems \ref{Rayleigh.max}, \ref{eigenvector.thm}, and \ref{pos.eigenvector} to $A$, with $R_A$ achieving its maximum value $\lambda=\rho(A)$ at unit vector $x\gg 0$.  Then
$\lambda=R_A(x)\geq R_A(y)\geq R_B(y)=\mu$.  So $\lambda\geq \mu$.

We now show $\lambda\neq \mu$.
Suppose $\lambda=\mu$; then $R_A$ achieves its maximum value at the unit vectors $x\gg 0$ and $y>0$.
Suppose $x\neq y$; then $x$ and $y$ are linearly independent.  Let $c=\max\left\lbrace y_i/x_i\right\rbrace$.  Then $z=cx-y>0$, $Az=\lambda z$,
and $z$ has some entry $z_i=0$ and some entry $z_j > 0$.  But $A$ is irreducible, so
the underlying graph of $A$ has a path from vertex $i$ to vertex $j$, say of length $m$; then
$A^m$ has its $(i,j)$ entry non-zero. Then $z'=A^m z$ has $z'_i\geq(A^m)_{ij}z_j>0$;
but this contradicts $z'_i=\lambda^m z_i=0$, which follows from $A^m z=\lambda^m z$.
Thus, $x=y$ and $(A-B)x=(\lambda-\mu)x=0$, which is impossible since $(A-B)>0$ and $x\gg 0$ implies  $(A-B)x>0$.
Thus, $\lambda>\mu$, i.e., $\rho(A)>\rho(B)$.



\end{proof}

\section{Graphs}


A graph is a finite set of vertices and edges.  Let's exclude loops and multiple edges.
We say that vertices $x$ and $y$ are adjacent when $xy$ is an edge.
Enumerating the vertices of a graph gives an adjacency matrix which completely describes 
the graph; it is a symmetric matrix of 0's and 1's, indicating which vertices are adjacent.

A graph is bicolored if we have assigned a color red or blue to each vertex, so that each edge
connects a red and a blue vertex. 
A bicolored graph is completely described by an $m\times n$ matrix $B$ of 0's and 1's, once
we enumerate its $m$ red vertices and its $n$ blue vertices.

A small operator corresponds to a small matrix of 0's and 1's, which in turn corresponds
to a small bicolored graph. 
If $B$ is an $m\times n$ matrix corresponding to a bicolored graph, 
then the adjacency matrix of the underlying graph (forgetting the bicoloring)
is the symmetric matrix $A$ associated to $B$.


We will classify the small bicolored graphs, up to 
isomorphism and disjoint union of bicolored graphs. 
The first step (which turns out to be the main step for the classification of small operators) 
is the classification of ``small'' graphs, in the following sense.

Let us say that a graph is small when its adjacency matrix has spectral radius less than 2.
So we have that the matrix of a small operator corresponds exactly to a bicoloring of
a small graph.


Equivalence of graphs is isomorphism of graphs.
Decomposition of graphs is disjoint union of graphs.
A graph is small if and only if all its connected components are small graphs.

Now we use our results about non-negative square matrices from the previous section.
Recall that the undirected graphs that we work with have symmetric adjacency matrix,
which is irreducible if and only if the graph is connected.

In particular, if $A'$ is the adjacency matrix of a proper subgraph 
of a connected graph with adjacency matrix $A$,
then $A>A'\geq 0$ and $\rho(A)>\rho(A')$.

When $A$ is the $n\times n$ adjacency matrix for one of our graphs,
we can interpret an $n\times 1$ vector $v$ 
as assigning a number $v(x)$ to each vertex $x$ of the graph.
Then $v'=Av$ means that $v'(x)=\sum v(y)$, where we sum over the vertices $y$ which are 
adjacent to $x$.

This helps us verify that the connected graphs in Figure \ref{forbidden.subgraphs} all have spectral radius $2$: just assign a number $v(x)$ to each vertex $x$ 
so that $2v(x)=\sum v(y)$, where we sum over the vertices $y$ which are 
adjacent to $x$.
We refer to these graphs as ``forbidden subgraphs'', since a {\it small} connected graph 
cannot contain any of these as a subgraph (and still have spectral radius less than $2$).

\begin{thm}[{\bf Classification for Small Graphs}]
A small graph is a disjoint union of connected small graphs.  Each connected small graph is isomorphic to $A_n$, $D_n$, or $E_n$, for some $n$.
\end{thm}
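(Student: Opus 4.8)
The plan is to reduce the whole statement to a short combinatorial analysis of trees, with all the spectral input packaged in the Comparison Theorem. The first assertion is immediate: the adjacency matrix of a disjoint union of graphs is block diagonal on the adjacency matrices of its components, so by Lemma~\ref{norm.is.max.of.block.norms} and Theorem~\ref{norm.equals.spectral.radius} its spectral radius is the maximum of the component spectral radii; hence a graph is small exactly when each connected component is small. It therefore suffices to identify the connected small graphs.

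For a connected small graph $G$ I would argue in two steps. First, fix a family of ``forbidden'' connected graphs of spectral radius $2$ and show that $G$ contains none of them as a subgraph. Writing $T_{a,b,c}$ for the tree with one vertex of degree $3$ and three legs of $a\le b\le c$ edges, the forbidden graphs are: every cycle ($\tilde A_k$); the star $K_{1,4}=\tilde D_4$; for $n\ge 5$ a path with two pendant leaves attached at each end ($\tilde D_n$); and $\tilde E_6=T_{2,2,2}$, $\tilde E_7=T_{1,3,3}$, $\tilde E_8=T_{1,2,5}$. Each of these is connected and carries a positive vertex labeling $v$ with $2v(x)=\sum_{y\sim x}v(y)$, that is, a positive eigenvector of eigenvalue $2$; pairing it against the positive eigenvector of eigenvalue $\rho(\tilde X)$ supplied by Theorem~\ref{eigenvector.thm} and using symmetry gives $\rho(\tilde X)=2$ for each forbidden graph $\tilde X$. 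Now if a connected $G$ contains some $\tilde X$ as a subgraph, then either $G=\tilde X$, so $\rho(G)=2$, or $\tilde X$ is a proper connected subgraph of $G$, in which case $A_G\ge A_{\tilde X}\ge 0$ with $A_G\neq A_{\tilde X}$ and $A_G$ irreducible, so the Comparison Theorem~\ref{comparison.theorem} gives $\rho(G)>\rho(\tilde X)=2$; in either case $G$ is not small.

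The second step classifies the connected graphs avoiding all those subgraphs. Avoiding every cycle makes $G$ a tree; avoiding $K_{1,4}$ bounds every vertex degree by $3$; and avoiding the graphs $\tilde D_n$ ($n\ge 5$) leaves $G$ with at most one vertex of degree $3$ --- for if $u\neq v$ both had degree $3$, the unique path between them, together with the two edges at $u$ and the two edges at $v$ that leave this path, would span a copy of some $\tilde D_n$. If $G$ has no vertex of degree $3$ it is a path, namely $A_n$. Otherwise $G=T_{a,b,c}$ with $1\le a\le b\le c$, and truncating legs shows $T_{2,2,2}\subseteq G$ unless $a=1$, then $T_{1,3,3}\subseteq G$ unless $b\le 2$, and, when $b=2$, $T_{1,2,5}\subseteq G$ unless $c\le 4$. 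Hence $b=1$ gives $G=T_{1,1,c}=D_{c+3}$, while $b=2$ gives $G=T_{1,2,2}=E_6$, $T_{1,2,3}=E_7$, or $T_{1,2,4}=E_8$, which exhausts the possibilities.

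I expect the obstacle here to be organizational rather than conceptual: correctly enumerating the affine diagrams together with their eigenvalue-$2$ labelings, checking that ``subgraph'' (not \emph{induced} subgraph) is the notion for which the monotonicity $A_G\ge A_{\tilde X}$ behind the Comparison Theorem holds, and verifying in the tree argument that each failure of a degree or branching bound really does exhibit one of the forbidden graphs as a spanned subgraph. It is worth also recording the easy converse that makes the list sharp: each of $A_n$, $D_n$, $E_6$, $E_7$, $E_8$ sits as a proper connected subgraph of its affine counterpart ($A_n\subsetneq\tilde A_n$, $D_n\subsetneq\tilde D_n$, $E_n\subsetneq\tilde E_n$), so by the Comparison Theorem its spectral radius is strictly below $2$ and it is indeed small.
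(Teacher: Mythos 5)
Your proposal is correct and follows essentially the same route as the paper: block-diagonal reduction to connected components, the affine diagrams as forbidden subgraphs of spectral radius $2$ (certified by the eigenvalue-$2$ labeling) combined with the Comparison Theorem, the tripod leg-length analysis yielding $A_n$, $D_n$, $E_6$, $E_7$, $E_8$, and the converse via proper containment in the affine graphs. You simply spell out details the paper leaves implicit, such as why a positive eigenvalue-$2$ eigenvector forces the spectral radius to equal $2$ and why two degree-$3$ vertices produce a forbidden $\tilde D_n$.
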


\begin{proof}
Let $\Gamma$ be a connected small graph.  Note that $\Gamma$ cannot contain a cycle, a vertex of degree $4$ or more, or more than one vertex of degree $3$ (since then $\Gamma$ would contain one of the forbidden subgraphs in Figure \ref{forbidden.subgraphs}, and would have spectral radius at least $2$ by Theorem \ref{comparison.theorem}).  Let $T_{p,q,r}$ denote the ``tripod'' graph, consisting of three legs with $p$, $q$, and $r$ vertices.  If $\Gamma$ is a small tripod graph, there are limitations on how long its legs can be (since $\Gamma$ cannot contain any of the forbidden tripods in Figure \ref{forbidden.subgraphs} as subgraphs).  Thus, a small connected graph is isomorphic to one from the $ADE$ series, shown in Figure \ref{small.graphs}.  Note that each of the graphs in Figure \ref{small.graphs} is a proper subgraph of a graph in Figure \ref{forbidden.subgraphs}, and so must have spectral radius less than $2$ by Theorem \ref{comparison.theorem}.

Note that if $\Gamma$ is small but not connected, the vertices can be enumerated so that its adjacency matrix is block diagonal on square matrices.  Using Lemma \ref{norm.is.max.of.block.norms}, each connected component of $\Gamma$ must be small so that $\Gamma$ is a disjoint union of connected small graphs.
\end{proof}

\begin{figure}[h]
\caption{Forbidden subgraphs, each with spectral radius $2$}
\label{forbidden.subgraphs}
\phantom{}
\begin{center}
\begin{tabular}{cccc}
\begin{dynkin}
  \dynkinline{1}{0}{2}{.5};
  \dynkinline{1}{0}{2}{-.5};
  \dynkinline{2}{.5}{3}{.5};
  \dynkinline{2}{-.5}{3}{-.5};
  \dynkindots{3}{-.5}{3}{.5};
  \dynkindot{1}{0};
  \dynkindot{2}{.5};
  \dynkindot{2}{-.5};
  \dynkindot{3}{.5};
  \dynkindot{3}{-.5};
\end{dynkin}

&&&
\begin{dynkin}
	\dynkinline{1}{.5}{2}{0};
    \dynkinline{1}{-.5}{2}{0};
    \dynkinline{2}{0}{3}{.5};
    \dynkinline{2}{0}{3}{-.5};
    \dynkindot{1}{.5};
    \dynkindot{1}{-.5};
     \dynkindot{3}{.5};
    \dynkindot{3}{-.5};
    \foreach \x in {2,...,2}
{
       \ifnum \x=100 {\dynkincross{\x}{0}}
       \else {\dynkindot{\x}{0}}
       \fi
    }
\end{dynkin}\\

&&&\\
&&&\\

\begin{dynkin}
	\dynkinline{1}{.5}{2}{0};
    \dynkinline{1}{-.5}{2}{0};
    \dynkinline{2}{0}{3}{0};
    \dynkindots{3}{0}{4}{0};
    \dynkinline{4}{0}{5}{0};
    \dynkinline{5}{0}{6}{.5};
    \dynkinline{5}{0}{6}{-.5};
    \dynkindot{1}{.5};
    \dynkindot{1}{-.5};
     \dynkindot{6}{.5};
    \dynkindot{6}{-.5};
    \foreach \x in {2,...,5}
{
       \ifnum \x=100 {\dynkincross{\x}{0}}
       \else {\dynkindot{\x}{0}}
       \fi
    }
\end{dynkin}

&&&  
\begin{dynkin}
    \dynkinline{1}{.5}{2}{.5};
    \dynkinline{2}{.5}{3}{0};
    \dynkinline{1}{-.5}{2}{-.5};
    \dynkinline{2}{-.5}{3}{0};
    \dynkinline{3}{0}{4}{0};
    \dynkinline{4}{0}{5}{0};
    \dynkindot{1}{.5};
    \dynkindot{1}{-.5};
    \dynkindot{2}{.5};
    \dynkindot{2}{-.5};
    \foreach \x in {3,...,5}
{
       \ifnum \x=100 {\dynkincross{\x}{0}}
       \else {\dynkindot{\x}{0}}
       \fi
    }
\end{dynkin}\\

&&&\\
&&&\\

\begin{dynkin}
    \dynkinline{1}{.5}{2}{.5};
    \dynkinline{2}{.5}{3}{.5};
    \dynkinline{3}{.5}{4}{0};
    \dynkinline{4}{0}{5}{0};
    \dynkinline{1}{-.5}{2}{-.5};
    \dynkinline{2}{-.5}{3}{-.5};
    \dynkinline{3}{-.5}{4}{0};
    \dynkindot{1}{.5};
    \dynkindot{2}{.5};
    \dynkindot{3}{.5};
    \dynkindot{1}{-.5};
    \dynkindot{2}{-.5};
    \dynkindot{3}{-.5};
    \foreach \x in {4,...,5}
{
       \ifnum \x=100 {\dynkincross{\x}{0}}
       \else {\dynkindot{\x}{0}}
       \fi
    }
\end{dynkin}

&&&

\begin{dynkin}
		\dynkinline{0}{.5}{1}{.5};
    \dynkinline{1}{.5}{2}{.5};
    \dynkinline{2}{.5}{3}{.5};
    \dynkinline{3}{.5}{4}{.5};
    \dynkinline{4}{.5}{5}{0};
    \dynkinline{5}{0}{6}{0};
    \dynkinline{3}{-.5}{4}{-.5};
    \dynkinline{4}{-.5}{5}{0};
    \dynkindot{5}{0};
    \dynkindot{6}{0};
    \dynkindot{3}{-.5};
    \dynkindot{4}{-.5};
    \foreach \x in {0,...,4}
{
       \ifnum \x=100 {\dynkincross{\x}{0}}
       \else {\dynkindot{\x}{.5}}
       \fi
    }
\end{dynkin}\\

\end{tabular}
\end{center}
\end{figure}

\begin{figure}[h]
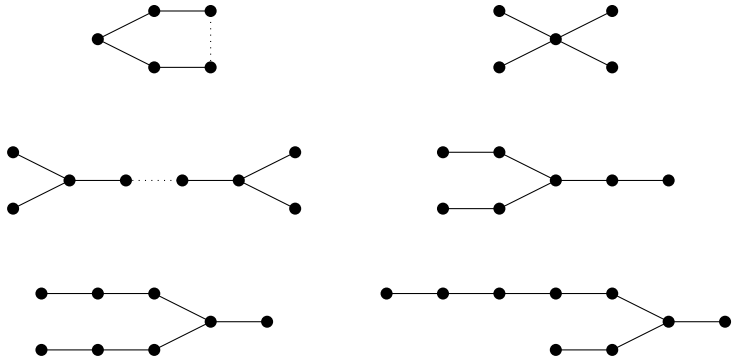

\caption{Connected graphs with spectral radius $<2$}
\label{small.graphs}
\phantom{}
\begin{center}
\begin{tabular}{cccc}
$A_n$~~
\begin{dynkin}
  \dynkinline{1}{0}{2}{0};
  \dynkindots{2}{0}{3}{0};
  \dynkinline{3}{0}{4}{0};
    \foreach \x in {1,...,4}
{
       \ifnum \x=100 {\dynkincross{\x}{0}}
       \else {\dynkindot{\x}{0}}
       \fi
    }
  \end{dynkin}

&&&

$D_n$~~
\begin{dynkin}
    \dynkinline{1}{0}{2}{0};
    \dynkindots{2}{0}{3}{0};
    \dynkinline{3}{0}{4}{0};
    \dynkinline{4}{0}{5}{.5};
    \dynkinline{4}{0}{5}{-.5};
    \dynkindot{5}{.5};
		\dynkindot{5}{-.5};
    \foreach \x in {1,...,4}
{
       \ifnum \x=100 {\dynkincross{\x}{0}}
       \else {\dynkindot{\x}{0}}
       \fi
    }
  \end{dynkin}\\
	
&&&\\
&&&\\

$E_6$~~
\begin{dynkin}
    \dynkinline{1}{.5}{2}{.5};
    \dynkinline{2}{.5}{3}{0};
    \dynkinline{3}{0}{4}{0};
    \dynkinline{1}{-.5}{2}{-.5};
    \dynkinline{2}{-.5}{3}{0};
    \dynkindot{1}{.5};
    \dynkindot{2}{.5};
    \dynkindot{1}{-.5};
    \dynkindot{2}{-.5};
    \foreach \x in {3,...,4}
{
       \ifnum \x=100 {\dynkincross{\x}{0}}
       \else {\dynkindot{\x}{0}}
       \fi
    }
\end{dynkin}

&&&

$E_7$~~
\begin{dynkin}
    \dynkinline{1}{.5}{2}{.5};
    \dynkinline{2}{.5}{3}{.5};
    \dynkinline{3}{.5}{4}{0};
    \dynkinline{4}{0}{5}{0};
    \dynkinline{2}{-.5}{3}{-.5};
    \dynkinline{3}{-.5}{4}{0};
    \dynkindot{4}{0};
    \dynkindot{5}{0};
    \dynkindot{2}{-.5};
    \dynkindot{3}{-.5};
    \foreach \x in {1,...,3}
{
       \ifnum \x=100 {\dynkincross{\x}{0}}
       \else {\dynkindot{\x}{.5}}
       \fi
    }
\end{dynkin}\\

&&&\\
&&&\\

$E_8$~~
\begin{dynkin}
    \dynkinline{0}{.5}{1}{.5};
    \dynkinline{1}{.5}{2}{.5};
    \dynkinline{2}{.5}{3}{.5};
    \dynkinline{3}{.5}{4}{0};
    \dynkinline{4}{0}{5}{0};
    \dynkinline{2}{-.5}{3}{-.5};
    \dynkinline{3}{-.5}{4}{0};
    \dynkindot{4}{0};
    \dynkindot{5}{0};
    \dynkindot{2}{-.5};
    \dynkindot{3}{-.5};
    \foreach \x in {0,...,3}
{
       \ifnum \x=100 {\dynkincross{\x}{0}}
       \else {\dynkindot{\x}{.5}}
       \fi
    }
\end{dynkin}

&&&\\
\end{tabular}
\end{center}
\end{figure}

\vfill\newpage
\begin{thm}[{\bf Classification Theorem for Small Operators}]
A small operator corresponds to a small bicolored graph, which is isomorphic to
a disjoint union of connected small graphs together with a bicoloration.
\end{thm}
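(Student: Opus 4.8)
The plan is to transport the Classification Theorem for Small Graphs across the dictionary between operators and bicolored graphs, using the associated symmetric matrix as the bridge.

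First I would recall that a small operator $B$ has only $0,1$ entries, so it is the incidence matrix of a bicolored graph $G$: after enumerating its $m$ red and $n$ blue vertices, one sets $B_{ij}=1$ exactly when red vertex $i$ is adjacent to blue vertex $j$. Under this dictionary the equivalence of operators (the $S_m\times S_n$ action on rows and columns) corresponds to color-preserving isomorphism of bicolored graphs, and direct sum of matrices corresponds to disjoint union of bicolored graphs; so it suffices to classify small bicolored graphs up to these notions.

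Next I would form the symmetric matrix $A$ associated to $B$, which (as observed in Section~4) is exactly the adjacency matrix of the underlying uncolored graph $\Gamma$ of $G$. By the theorem that $|A|=|B|$ together with Theorem~\ref{norm.equals.spectral.radius}, which gives $|A|=\rho(A)$, one gets $\rho(A)=|B|<2$, so $\Gamma$ is a small graph; the Classification Theorem for Small Graphs then says $\Gamma$ is a disjoint union of components, each isomorphic to some $A_n$, $D_n$, or $E_n$ from Figure~\ref{small.graphs}. Since $B$ is recovered from $\Gamma$ only once we also remember the red/blue partition of the vertices, $G$ is precisely such a disjoint union together with a bicoloration, which is the assertion. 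For completeness I would add the converse: any bicoloration of a disjoint union of $ADE$ graphs produces a $0,1$ matrix whose underlying graph is small, hence (again since $|B|=\rho(A)<2$) a small operator, so the correspondence is a bijection on equivalence classes.

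The point that needs care — bookkeeping rather than a genuine obstacle — is the bicoloration datum that passing to $\Gamma$ forgets. Each $A_n$, $D_n$, $E_n$ is a tree, hence bipartite, so it admits a bicoloration, unique up to interchanging the two colors; a disjoint union therefore carries a bicoloration given by one such choice per connected component, and there is no existence obstruction. Thus the graph picture loses only this finite coloring choice, and the small operators are classified exactly as the bicolorations of disjoint unions of the $ADE$ graphs.
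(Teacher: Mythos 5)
Your proposal is correct and follows essentially the same route the paper intends: the paper leaves this theorem's proof implicit, relying on the dictionary of Section~4 (small operator $\leftrightarrow$ $0,1$ matrix $\leftrightarrow$ bicolored graph, with the associated symmetric matrix $A$ being the adjacency matrix of the underlying graph), together with $|B|=|A|=\rho(A)$ and the Classification for Small Graphs. Your added remarks on the converse and on the existence/uniqueness of bicolorations of the $ADE$ trees are consistent with, and slightly more explicit than, what the paper records.
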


\section{Remarks}

Here are some brief remarks on some famous classification results from different areas of mathematics
Each of these areas has a natural notion of decomposition into indecomposables, 
and the classification is largely 
parallel to the classification of small operators.

Let's start with a historical sketch of the classification story.

In the 1880's Wilhelm Killing worked on classifying possible types of geometries.
He used recent 
developments in linear algebra to work out a classification 
of (what turned out to be) the semi-simple Lie algebras over the complex numbers.
He was partly inspired by Sophus Lie's ongoing work on ``continuous groups''.

In particular, Killing used sophisticated ideas about eigenvalues to
record an isomorphism class of semi-simple
Lie algebras in terms of a ``root system''.
The root systems and  Lie algebras are then 
built up as direct sums of indecomposable ones; and Killing
essentially classified the indecomposables
into types $A$, $B$, $C$, $D$, $E$, $F$, and $G$.


Eli Cartan organized and completed this classification in his 1894 thesis. 
The data for a root system can be encoded in a matrix of integers,
now called the Cartan matrix of the root system;
it determines an integer-valued bilinear form on a 
maximal abelian subalgebra of the Lie algebra.
See Coleman \cite{Coleman} and Hawkins \cite{Hawkins} for more on the history of these developments. 

Donald Coxeter made a separate contribution through his  study of kaleidoscopes. 
By the early 1930's he had classified
those sets of mirrors in a real finite-dimensional inner product space
which generate a finite group of reflections (see \cite{roberts}).
In his 1934 paper ``Discrete groups generated by reflections'' \cite{Cox},
Coxeter used graphs to describe his mirror systems.
In particular, certain of his finite reflection groups were encoded by
connected undirected graphs, without loops and multiple edges.  
We refer to these as the $ADE$ series of graphs.

Hermann Weyl gave a series of lectures on Lie algebras at Princeton that year,
and Coxeter observed that his crystallographic reflection groups (those preserving a lattice)  
correspond to certain permutation groups of roots in a root system, now called
the Weyl groups. The connected $ADE$ graphs determine 
the Cartan matrices for the ``simply-laced'' simple Lie algebras.
The mimeographed lecture notes, published 1934-1935, include an appendix by Coxeter in which
these graphs appear \cite{Weyl}.
These classification ideas continued to be developed in work
by Eugene Dynkin (1947, 1952),
Bourbaki (1968, with exposition attributed to Jacques Tits), and many others.

The book by Fulton and Harris \cite{FH} is one good reference for the theory.



\bigskip
Let us close with a presentation of these $ADE$ classification results, organized around our notion of small graph.

\bigskip\noindent
{\bf A graph determines a quadratic form:}
Let $X$ be a graph with vertex set $X_0$.   Let ${\Z}X_0$ be the free abelian group with basis $X_0$,
so that the elements of ${\Z}X_0$ are the integer linear combinations of the elements in $X_0$.
Define  an integer-valued symmetric bilinear form on ${\Z}X_0$ by describing its values on $X_0$:
\[ (x|x)=2, \quad{\rm and}\quad  
   (x|y)=\begin{cases}  -1, &\text{if $xy$ is an edge;}\\
				\ \ 0, &\text{if $xy$ is not an edge.}\\
	\end{cases}\]

This bilinear form is ``even'', in that $q_X(v)=(v|v)/2$ defines an integral-valued quadratic form on ${\Z}X_0$.  This means $q_X(v+w)=q_X(v)+q_X(v|w)+q_X(w)$.  Note that over the integers, it is more convenient to not include the usual factor of $2$ in the middle correction term.

\begin{thm}
A graph $X$ is small if and only if its quadratic form $q_X$ is positive-definite.
\end{thm}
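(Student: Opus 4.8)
The plan is to identify the Gram matrix of $q_X$ and then read off everything from the eigenvalues of the adjacency matrix. After enumerating the vertex set $X_0$ as $\{x_1,\dots,x_n\}$, the symmetric bilinear form $(\cdot\,|\,\cdot)$ has Gram matrix $2I-A$, where $A$ is the adjacency matrix of $X$; equivalently $q_X(v)=\tfrac12\,v^\top(2I-A)v$ for $v\in\R^n$. (Since $2I-A$ is a rational matrix, $q_X$ is positive-definite as an integral form precisely when $2I-A$ is positive-definite as a real symmetric matrix.) Because $A$ is real symmetric it admits an orthonormal eigenbasis with real eigenvalues $\lambda_1,\dots,\lambda_n$, and $2I-A$ has the same eigenvectors with eigenvalues $2-\lambda_i$. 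Hence $q_X$ is positive-definite if and only if $\lambda_i<2$ for every eigenvalue $\lambda_i$ of $A$.

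It remains to match this with smallness, i.e.\ with $\rho(A)<2$. One direction is free: if $\rho(A)<2$ then $|\lambda_i|<2$, so in particular $\lambda_i<2$ for all $i$ and $q_X$ is positive-definite. For the converse I would use the Perron--Frobenius input already established for symmetric non-negative matrices. If $X$ has no edges then $A=0$, $q_X(v)=|v|^2$, and $X$ is small, so both sides hold; otherwise $A$ is a non-zero non-negative symmetric matrix, and Theorem~\ref{eigenvector.thm} together with Theorem~\ref{Rayleigh.max} gives $\rho(A)=\max_i|\lambda_i|=\lambda_{\max}$, the \emph{largest} eigenvalue of $A$. Therefore the condition ``$\lambda_i<2$ for all $i$'' is literally the condition $\lambda_{\max}=\rho(A)<2$, and positive-definiteness of $q_X$ implies $X$ is small.

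The one step carrying real content is the equality $\rho(A)=\lambda_{\max}$: positive-definiteness of $q_X$ only controls the top eigenvalue of $A$, and a priori one must still exclude a very negative eigenvalue $\lambda_i\le-2$ (which genuinely occurs for, say, even cycles, where $-2$ is an eigenvalue). Non-negativity of the adjacency matrix is exactly what forbids this, through Theorem~\ref{eigenvector.thm}. For a disconnected $X$ no extra work is needed, since Theorem~\ref{eigenvector.thm} requires only that $A$ be non-zero, non-negative and symmetric (not irreducible); alternatively one enumerates $X_0$ so that $A$ is block diagonal on the components of $X$ and combines Lemma~\ref{norm.is.max.of.block.norms} with the observation that $q_X$ is positive-definite iff its restriction to each component is.
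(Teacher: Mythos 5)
Your proof is correct and takes essentially the same route as the paper: identify $2I-A$ as the Gram matrix of the bilinear form attached to $q_X$ and translate smallness into the spectral condition $\rho(A)<2$. The only difference is one of detail --- you make explicit the step the paper leaves implicit, namely that positive-definiteness only forces every eigenvalue of $A$ to be less than $2$, and that non-negativity of $A$ (via the symmetric Perron--Frobenius result, Theorem~\ref{eigenvector.thm}) is what rules out an eigenvalue $\le -2$ and gives $\rho(A)=\lambda_{\max}$.
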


\begin{proof}
Let $A$ be the adjacency matrix of the graph; so $X$ is small if and only if $\rho(A)<2$, if and only if $C=2I-A$ is positive definite.
But $C$ is the symmetric matrix recording the  bilinear form corresponding to $q_X$.
\end{proof}

From this perspective, 
an integral-valued quadratic form on ${\Z}X_0$ is {\it small} when it is positive definite, and we have classified
the small quadratic forms.


\bigskip\noindent
{\bf A graph determines a group:}  For each vertex $x$, define an additive involution 
$s_x:{\Z}X_0\to {\Z}X_0$  by describing its values on $X_0$:
\[ s_x(x)=-x, \quad{\rm and}\quad  
   s_x(y)=\begin{cases}  	y+x, &\text{if $xy$ is an edge;}\\
					y, &\text{if $xy$ is not an edge.}\\
	\end{cases}\]

If $v=\sum_x v_x x$ in $\Z X_0$, we have $s_x(v)=v'$ where $v'_x=-v_x+\sum v_y$ where the sum is over those
vertices $y$ which are adjacent to $x$,  
and $v'_z=v_z$ for $z\neq x$.
So $s_x$ replaces the label 
at vertex $x$ by the sum of surrounding labels, minus the original label.  
In terms of the bilinear form for the graph $X$, the involution $s_x$  associated to vertex $x$ is given by
$v\mapsto v-(v|x)x$. 
Let $G_X$ be the group of additive isomorphisms of ${\Z}X_0$ generated by the $s_x$.
The group $G_X$ preserves the quadratic form $q_X$.


\begin{thm}
The graph $X$ is small if and only if the group $G_X$ is finite.
\end{thm}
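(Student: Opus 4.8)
The plan is to route everything through the earlier equivalence ``$X$ is small if and only if $q_X$ is positive definite''. Granting that, the forward implication becomes a compactness‑plus‑discreteness argument, and the reverse implication becomes the task of producing an element of infinite order, which is where the forbidden graphs of Figure~\ref{forbidden.subgraphs} come back in. First I would dispatch the direction ``$X$ small $\Rightarrow G_X$ finite''. If $X$ is small then the bilinear form $(\cdot\mid\cdot)$ on $\R X_0$ is positive definite. Every $g\in G_X$ is an automorphism of the group $\Z X_0$ preserving this form, so $g$ carries the lattice $\Z X_0$ into itself and sends each basis vector $x$ to a vector with $(g(x)\mid g(x))=(x\mid x)=2$. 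Because the form is positive definite, the set $L=\{v\in\Z X_0:(v\mid v)=2\}$ is finite — a set of lattice points inside a bounded region — and $g$ is determined by the tuple $(g(x))_{x\in X_0}\in L^{X_0}$. Hence $G_X$ is finite.

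For the converse I would prove the contrapositive ``$X$ not small $\Rightarrow G_X$ infinite'' by descending to a forbidden subgraph. Since a graph is small exactly when each of its components is small, some component of $X$ is not small, and by the argument used to prove the Classification Theorem for Small Graphs (a connected graph containing none of the forbidden subgraphs of Figure~\ref{forbidden.subgraphs} is an $A_n$, $D_n$, or $E_n$, hence small), that component contains one of the forbidden graphs as a subgraph; replacing a cycle by a shortest one, and using that a connected subgraph of a tree is automatically induced, I may take this to be an \emph{induced} subgraph $Y\subseteq X$. Since $Y$ is induced, the sublattice $\Z Y_0$ is carried into itself by each $s_y$ with $y\in Y_0$ (for $z\in Y_0$ one has $s_y(z)\in\{-z,\,z+y,\,z\}\subseteq\Z Y_0$), and the restriction of $s_y$ to $\Z Y_0$ is exactly the reflection attached to the graph $Y$; so restriction is a homomorphism from $\langle s_y:y\in Y_0\rangle\le G_X$ onto $G_Y$, and it is enough to show that $G_Y$ is infinite.

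To finish, I would show that $G_Y$ is infinite for each forbidden graph $Y$. Such a $Y$ has spectral radius $2$, so Theorems~\ref{eigenvector.thm} and~\ref{pos.eigenvector} give a primitive integer vector $\delta\gg 0$ with $A_Y\delta=2\delta$; equivalently $(\delta\mid y)=0$ for every vertex $y$, so $\delta$ lies in the radical of $(\cdot\mid\cdot)$, is fixed by every $s_y$, and $(\delta\mid\delta)=0$. Fixing a vertex $\alpha$ and putting $\beta=\alpha+\delta$, one has $(\beta\mid\beta)=2$, and $\beta$ again lies in the $G_Y$-orbit of a vertex, so $s_\beta\in G_Y$; a short computation using $(\delta\mid\,\cdot\,)=0$ then shows that the transvection $t=s_\beta s_\alpha$ acts by $t(v)=v+(v\mid\alpha)\,\delta$, whence the vectors $t^k(\alpha)=\alpha+2k\delta$ are pairwise distinct and $t$ has infinite order. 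Thus $G_Y$, and so $G_X$, is infinite.

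The hard part is that last step, and within it the claim that $\beta=\alpha+\delta$ is again in the $G_Y$-orbit of a vertex, i.e.\ that $s_{\alpha+\delta}\in G_Y$: conceptually this is the statement that an affine reflection group contains a nontrivial lattice of translations, and a fully self‑contained account must either quote the standard fact that $\gamma\mapsto\gamma+\delta$ preserves the real roots of an affine root system, or check it directly on the short list of diagrams in Figure~\ref{forbidden.subgraphs}. (Alternatively, one may simply invoke the classical fact that each affine Coxeter group is infinite.) The remaining ingredients are routine: the ``small $\Rightarrow$ finite'' half is immediate from positive‑definiteness, and the passage to an induced forbidden subgraph only reuses the graph classification already established.
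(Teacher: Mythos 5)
Your proposal is correct in outline, but it takes a genuinely different route from the paper's. The paper's own argument is little more than a verification of the Coxeter relations --- $s_xs_x=1$, and $s_xs_y$ of order $3$ or $2$ according as $xy$ is or is not an edge --- followed by an appeal to Coxeter \cite{Cox}; both directions of the equivalence are effectively delegated to that citation (note that the relations alone only exhibit $G_X$ as a quotient of the abstract Coxeter group, so the ``not small $\Rightarrow$ infinite'' direction in particular needs something like faithfulness, which is also buried in the reference). You instead work on the lattice $\Z X_0$ itself: for the forward direction you combine smallness $\Leftrightarrow$ positive definiteness of $q_X$ with the observation that $G_X$ preserves the form and permutes the finite set $L=\{v:(v\mid v)=2\}$ containing the images of the basis, which is completely self-contained and elementary; for the converse, your reduction to an \emph{induced} forbidden subgraph is sound (a shortest cycle is chordless, and a connected subgraph of a tree is automatically induced, so the dichotomy underlying the Classification for Small Graphs does produce an induced affine diagram $Y$), the restriction homomorphism onto $G_Y$ is correct, and the transvection computation $t(v)=v+(v\mid\alpha)\,\delta$, $t^k(\alpha)=\alpha+2k\delta$ is right. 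The one real dependency is the step you flag yourself: that $s_{\alpha+\delta}$ lies in $G_Y$, equivalently that $\alpha+\delta$ is in the $G_Y$-orbit of a vertex. That is the standard affine-root-system fact that these groups contain a lattice of translations; since Figure \ref{forbidden.subgraphs} is a finite list it can also be verified diagram by diagram, so what you end up citing (or checking) is narrower in scope than, though of the same nature as, the paper's wholesale appeal to \cite{Cox}. Two small economies: you need not invoke Theorems \ref{eigenvector.thm} and \ref{pos.eigenvector} and then argue rationality and primitivity of the Perron vector, since the explicit positive integer labeling $v$ with $2v(x)=\sum_{y\sim x}v(y)$, which the paper already uses to show each forbidden graph has spectral radius $2$, serves as $\delta$ directly; and the identity $(\delta\mid y)=0$ for all vertices $y$ then makes the radical and fixed-vector claims immediate.
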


\begin{proof}
From the definition, $s_x s_x=1$ for every vertex $x$, $s_x s_y s_x=s_y s_x s_y$ if $xy$ is an edge, and $s_y s_x=s_x s_y$ if $xy$ not an edge.  This means that $s_y s_x$ has order 2 if $xy$ not an edge, and $s_y s_x$ has order 3 if $xy$ is an edge.  This establishes the connection between graphs and the presentation of the simply-laced crystallographic Coxeter groups.  For more details, see Coxeter's paper \cite{Cox}.
\end{proof}

The small graphs correspond to the simply-laced Weyl groups, so the above work 
completes the classification of the simply-laced Weyl groups.

Let us go on to explain the notion of ``root system'' associated to these ideas.

\bigskip\noindent
{\bf Graphs and (simply-laced) root systems:}  A graph $X$ determines a group $W=G_X$ 
and a lattice $\Lambda=\Z X_0$, 
together with a quadratic form $q=q_X$ with $q(x)=1$ for $x\in X_0$.
Thus the set $\Gamma=\{v\in \Lambda: q(v)=1\}$
generates $\Lambda$ as a $\Z$-module. 

The graph is small if and only if $q$ is positive definite.
For such a triple $(\Lambda,q,\Gamma)$ the real vector space $V$ generated by  $\Gamma$ (the set of roots) is 
an inner product space with norm $q$, and $\Gamma$ is finite, since $\Gamma$ is the
the intersection of a lattice and the unit sphere in a Euclidean space.
Then $\Lambda$  is called the root lattice in this Euclidean space, and $\Gamma$ is called the set of roots.
Moreover,  the involutions $s_x$, which generate $W$ are orthogonal reflections in this Euclidean space.

These are the root systems of the simply-laced semi-simple Lie algebras over the complex numbers; 
see Lurie's discussion in \cite{Lurie}. 

\bigskip
This is the $ADE$ classification result:

\begin{thm}
Simply-laced root systems are classified by small graphs.
\end{thm}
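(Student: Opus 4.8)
The plan is to set up the correspondence in both directions, check that it is a bijection on isomorphism classes compatible with the decomposition into indecomposables, and then appeal to the classification of small graphs already obtained above.

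First I would show that a small graph $X$ produces a simply-laced root system. The discussion above already gives the triple $(\Lambda,q,\Gamma)$ with $\Lambda=\Z X_0$, with $q=q_X$ positive definite, and with $\Gamma=\{v\in\Lambda:q(v)=1\}$ a finite subset spanning the Euclidean space $V=\R\otimes\Lambda$. I would then verify the axioms of a reduced root system: $0\notin\Gamma$ and $\R\alpha\cap\Gamma=\{\pm\alpha\}$ follow from $q(c\alpha)=c^2q(\alpha)$; for $\alpha\in\Gamma$ the reflection $s_\alpha(v)=v-(v|\alpha)\alpha$ carries $\Lambda$ into $\Lambda$, since $(\,\cdot\,|\,\cdot\,)$ is integer-valued on $\Lambda$, and preserves $q$, so it permutes $\Gamma$; and the Cartan integer $\langle\beta,\alpha^\vee\rangle$ equals $(\beta|\alpha)\in\Z$ because $(\alpha|\alpha)=2$. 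The simply-laced condition is immediate: if $\beta\neq\pm\alpha$ then strict Cauchy--Schwarz gives $(\alpha|\beta)^2<(\alpha|\alpha)(\beta|\beta)=4$, so $(\alpha|\beta)\in\{-1,0,1\}$ and the angle between $\alpha$ and $\beta$ is $90^\circ$, $120^\circ$, or $60^\circ$. Finally $X_0$ is a base of $\Gamma$: if $v=\sum c_i x_i\in\Gamma$ had both a positive and a negative coefficient, then writing $v=p-n$ with $p,n>0$ of disjoint support and using $(x_i|x_j)\le 0$ together with the evenness of the form would give $(v|v)\ge(p|p)+(n|n)\ge 4$, contradicting $(v|v)=2$.

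Conversely, given a simply-laced root system $\Gamma\subset V$, rescale the inner product so that $(\alpha|\alpha)=2$ for all roots; then $(\alpha|\beta)\in\{-1,0,1\}$ for independent roots, and for a base $\Delta=\{\alpha_1,\dots,\alpha_n\}$ (chosen via a regular functional) the simple roots satisfy $(\alpha_i|\alpha_j)\in\{0,-1\}$ when $i\neq j$. I would attach to $\Gamma$ the graph $X$ with vertex set $\Delta$ and an edge $\alpha_i\alpha_j$ exactly when $(\alpha_i|\alpha_j)=-1$; the bilinear form of $X$ is then the restriction of the ambient positive-definite form, so $q_X$ is positive definite and $X$ is small. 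That the two assignments are mutually inverse on isomorphism classes follows from the standard structure theory: the simple roots form a $\Z$-basis of the root lattice, the Weyl group acts transitively on bases (so $X$ is independent of the choice of $\Delta$), and a root system is determined up to isomorphism by its Cartan matrix, which in the simply-laced case is exactly $2I-A$, where $A$ is the adjacency matrix of $X$. An isomorphism of root systems carries bases to bases and hence induces an isomorphism of the associated graphs, while an isomorphism of graphs extends $\Z$-linearly to an isometry of root lattices taking roots to roots. Since a direct sum of root systems corresponds to an orthogonal splitting of $V$ with the bases splitting accordingly, equivalently to a disjoint union of graphs, and an irreducible root system corresponds to a connected graph, the classification of small graphs transfers term by term: every simply-laced root system is a direct sum of copies of the ones attached to $A_n$, $D_n$, $E_6$, $E_7$, $E_8$.

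The elementary half of this argument — positive-definiteness, the integrality of $(\alpha|\beta)$, the $90^\circ/120^\circ/60^\circ$ dichotomy, the base property, and the enumeration into the $ADE$ list — uses only the graph-theoretic tools developed above. The part I expect to be the real obstacle, and which this paper does not make self-contained, is the classical structure theory invoked in the converse direction: the existence of a base, the transitivity of the Weyl group on bases, and the reconstruction of a root system from its Cartan matrix (see Fulton and Harris \cite{FH} or Lurie \cite{Lurie}).
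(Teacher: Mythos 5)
Your proposal is correct, and in fact it supplies more argument than the paper does: the paper states this theorem with no proof at all, treating it as a summary of the preceding discussion (graph $X$ $\rightsquigarrow$ triple $(\Lambda,q,\Gamma)$ with $\Lambda=\Z X_0$, $q=q_X$, $\Gamma=\{v:q(v)=1\}$), and it delegates the identification of such triples with the root systems of the simply-laced semi-simple Lie algebras to the references (Lurie, Fulton--Harris). Your forward direction is exactly the intended filling-in of that sketch, and your verification that $X_0$ is a base --- using that the form is even and positive definite, so a root with coefficients of mixed sign would satisfy $(v|v)\ge(p|p)+(n|n)\ge 4$ --- is a nice, genuinely elementary argument that the paper does not record. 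Your converse direction (choose a base, read off the graph from $(\alpha_i|\alpha_j)\in\{0,-1\}$, note that the Cartan matrix is $2I-A$ so smallness of $X$ is equivalent to positive-definiteness, then invoke transitivity of the Weyl group on bases and reconstruction of the root system from its Cartan matrix) is the standard route and matches what the paper implicitly relies on; you are right that these structure-theoretic inputs are exactly the part the paper does not make self-contained, and flagging them as imported from the classical theory is the honest way to state the dependence. In short: same approach as the paper's intended one, but your writeup actually constitutes a proof sketch, whereas the paper offers only the statement plus citations; the only caveat is that, as you say, the theorem is not self-contained at the level of the paper's own tools, since the existence of a base, Weyl-group transitivity, and determination by the Cartan matrix are used without proof.
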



\vfill\newpage
\bibliographystyle{amsplain}
\bibliography{bibliography}

\providecommand{\bysame}{\leavevmode\hbox to3em{\hrulefill}\thinspace}
\providecommand{\MR}{\relax\ifhmode\unskip\space\fi MR }
\providecommand{\MRhref}[2]{%
  \href{http://www.ams.org/mathscinet-getitem?mr=#1}{#2}
}
\providecommand{\href}[2]{#2}
\begin{thebibliography}{10}

\bibitem{symposium}
Vladimir Arnol'd, \emph{Problems of present day mathematics}, Mathematical
  developments arising from {H}ilbert problems (Felix~E Browder, ed.), Proc.
  Symp. Pure Math., vol.~28, Amer. Math. Soc., 1976, p.~46.

\bibitem{cameron}
Peter~J Cameron, Jean-Marie Goethals, Johan~Jacob Seidel, and Ernest~E Shult,
  \emph{Line graphs, root systems, and elliptic geometry}, Journal of Algebra
  \textbf{43} (1976), no.~1, 305--327.

\bibitem{Coleman}
AJ~Coleman, \emph{The greatest mathematical paper of all time}, The
  Mathematical Intelligencer \textbf{11} (1989), no.~3, 29--38.

\bibitem{Cox}
Harold~SM Coxeter, \emph{Discrete groups generated by reflections}, Annals of
  Mathematics (1934), 588--621.

\bibitem{FH}
William Fulton and Joe Harris, \emph{Representation theory: a first course},
  vol. 129, Springer Science \& Business Media, 2013.

\bibitem{Gant}
Feliks~Ruvimovich Gantmacher and Joel~Lee Brenner, \emph{Applications of the
  theory of matrices}, Courier Corporation, 2005.

\bibitem{GdlaHJ}
F~Goodman, Pierre de~la Harpe, and Vaughan~FR Jones, \emph{Coxeter graphs and
  towers of algebras}, Mathematical Sciences Research Institute Publications,
  vol.~14, Springer-Verlag, New York, 1989.

\bibitem{Hawkins}
Thomas Hawkins, \emph{Emergence of the theory of {L}ie groups: An essay in the
  history of mathematics 1869--1926}, Springer Science \& Business Media, 2000.

\bibitem{HHSV}
Michiel Hazewinkel, Wim Hesselink, Dirk Siersma, and Ferdinand Veldkamp,
  \emph{The ubiquity of {C}oxeter {D}ynkin diagrams (an introduction to the
  {ADE} problem)}, Nieuw Archief voor Wiskunde \textbf{25} (1977), no.~3,
  257--307.

\bibitem{lax}
Peter~D Lax, \emph{Linear algebra}, Pure and Applied Mathematics,
  Wiley-Interscience, 1996.

\bibitem{Lurie}
Jacob Lurie, \emph{On simply laced {L}ie algebras and their minuscule
  representations}, Commentarii Mathematici Helvetici \textbf{76} (2001),
  no.~3, 515--575.

\bibitem{roberts}
Siobhan Roberts, \emph{King of infinite space: Donald {C}oxeter, the man who
  saved geometry}, Bloomsbury Publishing USA, 2009.

\bibitem{Stern}
Shlomo Sternberg, \emph{Dynamical systems}, Courier Corporation, 2010.

\bibitem{Weyl}
H.~Weyl and R.~Brauer, \emph{The structure and representation of continuous
  groups}, Lectures, Institute for Advanced Study, 1935.

\end{thebibliography}



\end{document}